\newtheorem{thm}{Theorem}
\newdefinition{definition}{Definition}
\newtheorem{theoremA}{Theorem}
\journal{Operations Research Letters}
\begin{document}

\begin{frontmatter}


\title{
Double Traversals in Optimal Picker Routes \\
for Warehouses with Multiple Blocks
} 


\affiliation[label1]{
    organization={
    School of Information and Physical Sciences, 
    University of Newcastle
    },
    state={NSW},
    country={Australia}
}
\affiliation[label2]{
    organization={
    Department of Industrial and Management Systems Engineering,\\
    University of South Florida
    },
    state={FL},
    country={USA}
}
\affiliation[label3]{
    organization={
    Carey Business School,
    Johns Hopkins University
    },
    state={MD},
    country={USA}
}
\affiliation[label4]{
    organization={
    International Computer Science Institute,\\
    University of California at Berkeley
    },
    state={CA},
    country={USA}
}

\author[label1]{George Dunn\corref{cor1}}
\ead{george.dunn@uon.edu.au}

\author[label2]{Hadi Charkhgard}
\author[label3,label4]{Ali Eshragh}
\author[label1]{Elizabeth Stojanovski}

\cortext[cor1]{Corresponding author}


\begin{abstract}
Order picking is a process that involves collecting items from their
respective locations within a warehouse.
There exist dynamic programming algorithms for finding the
minimal picker route by considering only a limited number
of options for possible travel within a subaisle.
Although one such action,
traversing an aisle twice,
has been shown to never be required
for a rectangular warehouse with
two cross-aisles,
this is not the case when there are more than two
cross-aisles.
In this work,
we demonstrate that double traversals within a subaisle
are not required to connect cross-aisle
travel regardless of the number of cross-aisles.
This result simplifies the structure of
feasible tours,
enabling more efficient algorithms.
\end{abstract}






\begin{keyword}
Warehousing
\sep
Picker routing
\sep
Tour subgraphs
\sep
Dynamic programming
\end{keyword}

\end{frontmatter}




\section{Introduction}
\label{introduction}



The order picking problem involves determining the
shortest route through a warehouse that visits
all item locations in a specified pick list.
\citet{ratliff1983order} presented a dynamic programming
algorithm with linear complexity
that solves this problem
in the case of a warehouse with a parallel aisle structure
and two cross-aisles \citep{hessler2022note}.
This work was then extended by
\citet{roodbergen2001routing}
to the case of three cross-aisles,
and further generalized by \citet{pansart2018exact}
to accommodate any number of cross-aisles.
All three algorithms are based on
the assumption that
a minimal picker route involves only a
limited number of possible
options to travel within an aisle.
According to \citet{ratliff1983order},
in rectangular warehouse layouts with two cross-aisles,
any minimal distance picking route does not require traversing
the same aisle more than once.
A more extensive proof is provided by \citet{revenant2024note},
along with examples of such actions being
required for a minimal picker route in
warehouses with more than two cross-aisles.
In this letter,
we demonstrate that although
double edges are necessary for some minimal routes,
they are never required for connecting
cross-aisle travel.
This insight simplifies route optimization
by eliminating redundant aisle movements.


We consider a parallel aisle warehouse with
$m \geq 1$ vertical aisles and
$n \geq 2$ horizontal cross-aisles,
as depicted in Figure \ref{fig:two_warehouse_figure}, 
when $m = 4$ and $n = 3$.
Items are stored in subaisles,
the section of the aisle between two cross-aisles,
which are assumed to be sufficiently
narrow such that the horizontal distance required
to travel from one side
to the other is negligible.
The warehouse can then be represented as a graph
$G = (V \cup P, E)$
as illustrated in Figure \ref{fig:figure_warehouse_graph},
with vertices $v_{i,j} \in V$
at the intersection of aisle $i \in [1, m]$ and
cross-aisle $j \in [1, n]$, respectively.
A set of vertices, $P = \{p_0, p_1,  ..., p_k\}$,
represents the locations to be visited,
with $p_0$ as the depot where the picker must
start and finish the tour;
while $\{p_1, ..., p_k\}$ represents the products to be collected.
Only $p_0$ can be located at a $v_{i,j}$ vertex,
while all other vertices of $P$ are located within the subaisles.
The picker routing problem involves finding the
shortest tour of $G$ that visits all points in $P$.
Note that a directed graph is not required,
as constructing a tour from an undirected graph
is straightforward \citep{ratliff1983order}.
Lastly, we assume
that $G$ has a rectangular shape, that is,
all subaisles between two cross-aisles
have the same length $d_{j,j+1}^{aisle}$ for $1 \leq j < n$ and
the horizontal distances between aisles have a constant
length $d_{i,i+1}^{cross}$ for $1 \leq i < m$.

\begin{figure}[ht]
\centering
    \begin{minipage}{0.32\textwidth}
        \centering
        \resizebox{\textwidth}{!}{\begin{tikzpicture}[shorten >=1pt,draw=black!50]

    \draw[black, thin] (0, -1) -- (0 , 6) -- (6, 6) -- (6, -0.5)
    -- (1.5, -0.5) -- (1.5, -1) -- cycle;

    \foreach \name / \x in {0,...,3}{
    
        \draw[black, thin] (1.5 * \x, 0) -- (1.5 * \x , 2.5);
        \draw[black, thin] (1.5 * \x + 0.5, 0) -- (1.5 * \x + 0.5 , 2.5);
        \draw[black, thin] (1.5 * \x + 1, 0) -- (1.5 * \x + 1 , 2.5);
        \draw[black, thin] (1.5 * \x + 1.5, 0) -- (1.5 * \x + 1.5, 2.5);

        \draw[black, thin] (1.5 * \x, 3) -- (1.5 * \x , 5.5);
        \draw[black, thin] (1.5 * \x + 0.5, 3) -- (1.5 * \x + 0.5 , 5.5);
        \draw[black, thin] (1.5 * \x + 1, 3) -- (1.5 * \x + 1 , 5.5);
        \draw[black, thin] (1.5 * \x + 1.5, 3) -- (1.5 * \x + 1.5, 5.5);
        
        \foreach \name / \y in {0,...,11}{
            \draw[black, thin] (1.5 * \x, 0.5 * \y) -- (1.5 * \x + 0.5, 0.5 * \y);
            \draw[black, thin] (1.5 * \x + 1, 0.5 * \y) -- (1.5 * \x + 1.5, 0.5 * \y);
            }
    }


    \node[align=center] at (0.75, -0.75) 
    {\small Depot};

    \node[align=center] (cross) at (3, 5.75) {\small Cross-Aisle};
    \draw[->] (cross) -- (5, 5.75);
    \draw[->] (cross) -- (1, 5.75);
    
    \node[align=center, rotate=90] (aisle) at (0.75, 2.5) {\small Aisle};
    \draw[->] (aisle) -- (0.75, 5.5);
    \draw[->] (aisle) -- (0.75, 0);

    \node[align=center, rotate=90] (subaisle1) at (5.25, 1.25) {\small Subaisle};
    \draw[->] (subaisle1) -- (5.25, 2.5);
    \draw[->] (subaisle1) -- (5.25, 0);
    \node[align=center, rotate=90] (subaisle2) at (5.25, 4.25) {\small Subaisle};
    \draw[->] (subaisle2) -- (5.25, 5.5);
    \draw[->] (subaisle2) -- (5.25, 3);


    \node[circle,fill=black,minimum size=3pt] (item1) at (0.25, 1.25) {};

    \node[circle,fill=black,minimum size=3pt] (item2) at (1.25, 3.25) {};

    \node[circle,fill=black,minimum size=3pt] (item3) at (2.75, 1.75) {};

    \node[circle,fill=black,minimum size=3pt] (item4) at (2.75, 4.25) {};

    \node[circle,fill=black,minimum size=3pt] (item5) at (3.25, 2.25) {};

    \node[circle,fill=black,minimum size=3pt] (item6) at (4.25, 5.25) {};

    \node[circle,fill=black,minimum size=3pt] (item7) at (4.75, 0.75) {};

    \node[circle,fill=black,minimum size=3pt] (item8) at (5.75, 4.75) {};

\end{tikzpicture}}
        \caption{Layout.}
        \label{fig:two_warehouse_figure}
    \end{minipage}
    \begin{minipage}{0.32\textwidth}
        \centering
        \resizebox{\textwidth}{!}{\begin{tikzpicture}[shorten >=1pt,draw=black!50]


    \draw[black, thick, double, double distance between line centers=5pt] (1.5, -0.25) -- (6, -0.25);

    \draw[black, thick, double, double distance between line centers=5pt] (1.5, 2.75) -- (6, 2.75);

    \draw[black, thick, double, double distance between line centers=5pt] (1.5, 5.75) -- (6, 5.75);

    \foreach \name / \y in {1,...,4}{
        \node[shape=circle,draw=black, minimum size=20pt, fill=white] (A-\name) at (1.5 * \y, 5.75) {};
        \node[align=center] at (A-\name) {$v_{\name, 3}$};
        
        \node[shape=circle,draw=black, minimum size=20pt, fill=white] (B-\name) at (1.5 * \y , 2.75) {};
        \node[align=center] at (B-\name) {$v_{\name, 2}$};

        \node[shape=circle,draw=black, minimum size=20pt, fill=white] (C-\name) at (1.5 * \y , -0.25) {};
        \node[align=center] at (C-\name) {$v_{\name, 1}$};

        \draw[black, thick, double, double distance between line centers=5pt] (A-\name) -- (B-\name) -- (C-\name);
        }


    \node[align=center] at (1.95, 0.2) {$p_0$};

    \node[circle, draw=black, minimum size=20pt, fill=white] (item1) at (1.5, 1.25) {};
    \node[align=center] at (item1) {$p_1$};

    \node[circle,draw=black, minimum size=20pt, fill=white] (item2) at (1.5, 3.75) {};
    \node[align=center] at (item2) {$p_2$};

    \node[circle,draw=black, minimum size=20pt, fill=white] (item3) at (3, 1.5) {};
    \node[align=center] at (item3) {$p_3$};

    \node[circle,draw=black, minimum size=20pt, fill=white] (item4) at (3, 4.25) {};
    \node[align=center] at (item4) {$p_4$};

    \node[circle,draw=black, minimum size=20pt, fill=white] (item5) at (4.5, 1.75) {};
    \node[align=center] at (item5) {$p_5$};

    \node[circle,draw=black, minimum size=20pt, fill=white] (item6) at (4.5, 4.75) {};
    \node[align=center] at (item6) {$p_6$};

    \node[circle,draw=black, minimum size=20pt, fill=white] (item7) at (6, 0.75) {};
    \node[align=center] at (item7) {$p_7$};

    \node[circle,draw=black, minimum size=20pt, fill=white] (item8) at (6, 4.5) {};
    \node[align=center] at (item8) {$p_8$};

    \node (left) at (1, 0) { };
    \node (right) at (6.5, 0) { };

\end{tikzpicture}}
        \caption{Graph $G$.}   
        \label{fig:figure_warehouse_graph}
    \end{minipage}
    \begin{minipage}{0.32\textwidth}
        \centering
        \resizebox{\textwidth}{!}
        {\begin{tikzpicture}[shorten >=1pt,draw=black!50]





    \foreach \name / \y in {1,...,4}{
        \node[shape=circle,draw=black, minimum size=20pt, fill=white] (A-\name) at (1.5 * \y, 5.75) {};
        \node[align=center] at (A-\name) {$v_{\name, 3}$};
        
        \node[shape=circle,draw=black, minimum size=20pt, fill=white] (B-\name) at (1.5 * \y , 2.75) {};
        \node[align=center] at (B-\name) {$v_{\name, 2}$};

        \node[shape=circle,draw=black, minimum size=20pt, fill=white] (C-\name) at (1.5 * \y , -0.25) {};
        \node[align=center] at (C-\name) {$v_{\name, 1}$};

        }


    \node[align=center] at (1.95, 0.2) {$p_0$};

    \node[circle, draw=black, minimum size=20pt, fill=white] (item1) at (1.5, 1.25) {};
    \node[align=center] at (item1) {$p_1$};

    \node[circle,draw=black, minimum size=20pt, fill=white] (item2) at (1.5, 3.75) {};
    \node[align=center] at (item2) {$p_2$};

    \node[circle,draw=black, minimum size=20pt, fill=white] (item3) at (3, 1.5) {};
    \node[align=center] at (item3) {$p_3$};

    \node[circle,draw=black, minimum size=20pt, fill=white] (item4) at (3, 4.25) {};
    \node[align=center] at (item4) {$p_4$};

    \node[circle,draw=black, minimum size=20pt, fill=white] (item5) at (4.5, 1.75) {};
    \node[align=center] at (item5) {$p_5$};

    \node[circle,draw=black, minimum size=20pt, fill=white] (item6) at (4.5, 4.75) {};
    \node[align=center] at (item6) {$p_6$};

    \node[circle,draw=black, minimum size=20pt, fill=white] (item7) at (6, 0.75) {};
    \node[align=center] at (item7) {$p_7$};

    \node[circle,draw=black, minimum size=20pt, fill=white] (item8) at (6, 4.5) {};
    \node[align=center] at (item8) {$p_8$};

    \draw[black, thick, double, double distance between line centers=5pt]
    (item2) -- (B-1) -- (item1) -- (C-1) -- (C-2);

    \draw[black, thick, double, double distance between line centers=5pt]
    (item6) -- (A-3);

    \draw[black, thick, double, double distance between line centers=5pt]
    (item5) -- (C-3);

    \draw[black, thick]
    (C-2) -- (item3) -- (B-2) -- (item4) -- (A-2) --
    (A-3) -- (A-4) --
    (item8) -- (B-4) -- (item7) -- (C-4) --
    (C-3) -- (C-2);

    \node (left) at (1, 0) { };
    \node (right) at (6.5, 0) { };

\end{tikzpicture}}
        \caption{Subgraph $T$.}   
        \label{fig:figure_warehouse_subgraph}
    \end{minipage}
\end{figure}


For a minimal picker route,
there are only six possible
vertical edge configurations within each subaisle
and three horizontal edge configurations between
each cross-aisle,
as shown in Figure \ref{fig:vertical_action}
and Figure \ref{fig:horizontal_action},
respectively
\citep{ratliff1983order, roodbergen2001routing, pansart2018exact}.
Although vertical configuration $(v)$ has been shown
not to be required for a rectangular warehouse with
two cross-aisles,
there are cases where it is required for warehouses
with more that two aisles.
In this letter,
we show that a minimal picker route will never
require double vertical edges between
vertices with incident horizontal edges.
In Section \ref{prerequisites}, 
some preliminary results and definitions are introduced,
before proving the main theorem in
Section \ref{proof_v3}.

\begin{figure}[ht]
\centering
    \begin{minipage}{0.4\textwidth}
        \centering
        \resizebox{\textwidth}{!}
        {\begin{tikzpicture}[shorten >=1pt,->,draw=black!50, node distance=\layersep]

\tikzset{minimum size=34pt}

    \node[shape=circle,draw=black] (a1) at (0, 5) { };
    \node at (a1) {$v_{i,j+1}$};
    \node[shape=circle,draw=black] (d1) at (0, 3.5) { };
    \node[shape=circle,draw=black] (c1) at (0, 1.5) { };
    \node[shape=circle,draw=black] (b1) at (0, 0) {$v_{i,j}$}; 
    \draw[-, thick, draw=black] (a1) -- (d1) -- (c1) -- (b1);

    \node (i) at (0, -1) {(i)};

    \node[shape=circle,draw=black] (a2) at (1.5, 5) { };
    \node at (a2) {$v_{i, j+1}$};
    \node[shape=circle,draw=black] (d2) at (1.5, 3.5) { };
    \node[shape=circle,draw=black] (c2) at (1.5, 1.5) { };
    \node[shape=circle,draw=black] (b2) at (1.5, 0) {$v_{i, j}$}; 
    \draw[-, thick, draw=black, double, double distance between line centers=8pt] (a2) -- (d2) -- (c2);

    \node (ii) at (1.5, -1) {(ii)};

    \node[shape=circle,draw=black] (a3) at (3, 5) { };
    \node at (a3) {$v_{i, j+1}$};
    \node[shape=circle,draw=black] (d3) at (3, 3.5) { };
    \node[shape=circle,draw=black] (c3) at (3, 1.5) { };
    \node[shape=circle,draw=black] (b3) at (3, 0) {$v_{i, j}$}; 
    \draw[-, thick, draw=black, double, double distance between line centers=8pt] (b3) -- (c3) -- (d3);

    \node (iii) at (3, -1) {(iii)};

    \node[shape=circle,draw=black] (a4) at (4.5, 5) { };
    \node at (a4) {$v_{i, j+1}$};
    \node[shape=circle,draw=black] (d4) at (4.5, 3.5) { };
    \node[shape=circle,draw=black] (c4) at (4.5, 1.5) { };
    \node[shape=circle,draw=black] (b4) at (4.5, 0) {$v_{i, j}$}; 
    \draw[-, thick, draw=black, double, double distance between line centers=8pt] (a4) -- (d4);
    \draw[-, thick, draw=black, double, double distance between line centers=8pt] (b4) -- (c4);

    \node (iv) at (4.5, -1) {(iv)};

    \node[shape=circle,draw=black] (a5) at (6, 5) { };
    \node at (a5) {$v_{i, j+1}$};
    \node[shape=circle,draw=black] (d5) at (6, 3.5) { };
    \node[shape=circle,draw=black] (c5) at (6, 1.5) { };
    \node[shape=circle,draw=black] (b5) at (6, 0) {$v_{i, j}$}; 
    \draw[-, thick, draw=black, double, double distance between line centers=8pt] (a5) -- (d5) -- (c5) -- (b5);

    \node (v) at (6, -1) {(v)};

    \node[shape=circle,draw=black] (a6) at (7.5, 5) { };
    \node at (a6) {$v_{i, j+1}$};
    \node[shape=circle,draw=black] (d6) at (7.5, 3.5) { };
    \node[shape=circle,draw=black] (c6) at (7.5, 1.5) { };
    \node[shape=circle,draw=black] (b6) at (7.5, 0) {$v_{i, j}$}; 

    \node (vi) at (7.5, -1) {(vi)};

    \node (left) at (-0.5, 0) { };
    \node (right) at (8, 0) { };

\end{tikzpicture}}
        \caption{Vertical configurations.} 
        \label{fig:vertical_action}
    \end{minipage}
    \begin{minipage}{0.4\textwidth}
        \centering
        \resizebox{\textwidth}{!}
        {\begin{tikzpicture} 

\tikzset{minimum size=34pt}

    \node[shape=circle,draw=black] (al1) at (0, 5) { };
    \node at (al1) {\small $v_{i, j}$}; 
    \node[shape=circle,draw=black] (ar1) at (2, 5) { };
    \node at (ar1) {\small $v_{i+1, j}$}; 
    \node (1) at (1, 4.0) {(i)};

    \node[shape=circle,draw=black] (al2) at (0, 2.5) { };
    \node at (al2) {\small $v_{i, j}$}; 
    \node[shape=circle,draw=black] (ar2) at (2, 2.5) { };
    \node at (ar2) {\small $v_{i+1, j}$}; 
    \draw[-, thick] (al2) -- (ar2);
    \node (2) at (1, 1.5) {(ii)};

    \node[shape=circle,draw=black] (al3) at (0, 0) { };
    \node at (al3) {\small $v_{i, j}$}; 
    \node[shape=circle,draw=black] (ar3) at (2, 0) { };
    \node at (ar3) {\small $v_{i+1, j}$}; 
    \draw[-, thick, draw=black, double, double distance between line centers=8pt] (al3) -- (ar3);
    \node (3) at (1, -1) {(iii)};

    \node (4) at (-3, 0) { };
    \node (5) at (5.5, 0) { };

\end{tikzpicture}}
        \caption{Horizontal configurations.}
        \label{fig:horizontal_action}
    \end{minipage}
\end{figure}



\section{Prerequisites}
\label{prerequisites}


A subgraph $T \subseteq G$ that contains all points $P$
will be called a tour subgraph if it contains
a tour that uses every edge of $T$ exactly once,
as shown in Figure \ref{fig:figure_warehouse_subgraph}.
The following theorem and corollaries
defining the characteristics of a tour subgraph,
are based on a well-known theorem
on Eulerian graphs
\citep{ratliff1983order, christofides1975graph}.

\begin{theoremA}[Ratliff and Rosenthal, 1983]
\label{thm:subtour}
A subgraph $T \subseteq G$
is a tour subgraph if and only if:
\begin{enumerate}[(i)]
    \item All vertices of $P$ belong to the vertices of $T$;
    \item $T$ is connected; and 
    \item Every vertex in $T$ has an even degree.
\end{enumerate}
\end{theoremA}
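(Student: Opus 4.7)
The plan is to reduce the statement to the classical Eulerian circuit theorem, which asserts that a graph admits a closed walk traversing every edge exactly once if and only if it is connected and every vertex has even degree. Condition (i) is simply the defining requirement that a tour subgraph contain every required pick point among its vertices, so the content of the equivalence really concerns conditions (ii) and (iii). I would therefore structure the argument as two implications and, in each direction, appeal to the cited Eulerian result (Christofides, 1975) rather than reconstructing it from scratch.

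For the forward direction I would assume that $T$ is a tour subgraph and verify each of (i)--(iii) in turn. Condition (i) is immediate from the definition. For (iii), I would observe that along a closed walk using every edge of $T$ exactly once, each visit to a vertex $u$ pairs an incoming edge with an outgoing one, and since each incident edge is used exactly once, the degree of $u$ must be even. For (ii), I would note that the walk traverses every edge, and that any vertex of $V(T)$ with at least one incident edge lies on the walk; connectedness of $T$ then follows because the walk itself is a connected subgraph spanning every edge.

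For the converse I would assume (i)--(iii) and invoke the Eulerian theorem directly: a connected graph in which every vertex has even degree possesses a closed walk using each edge exactly once, and this walk is the tour witnessing that $T$ is a tour subgraph. Since $P \subseteq V(T)$ by (i), the walk visits every required point, completing the argument.

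The main (and really the only) obstacle is the minor bookkeeping point that an Euler circuit strictly speaking visits only vertices incident to at least one edge. In the present setting this is harmless: if some $p \in P$ were isolated in $T$ while $T$ has other vertices, condition (ii) would fail; and the degenerate case $|V(T)|=1$ is vacuous. Apart from this small clarification, the proof is a direct instantiation of a well-known result, and the work to present is limited to recording the three verifications in the forward direction and citing the Eulerian theorem in the backward direction.
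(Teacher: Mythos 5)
Your argument is correct and is exactly the reduction to the Eulerian circuit theorem that the paper itself points to: Theorem~\ref{thm:subtour} is stated there as a known result of Ratliff and Rosenthal, justified only by citation to the classical Eulerian graph theorem, and your forward verification of (i)--(iii) plus the converse appeal to that theorem is the standard proof. The small bookkeeping point you raise about isolated vertices is handled sensibly and does not affect the result.
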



We now define configurations that span one or
more adjacent subaisles.
Let $(v_{i,j}, v_{i,k})^1$ be a single
vertical edge between vertices such that
all subaisles
between contain a single edge and
no vertices between have horizontal
incident edges.
With edge multiplicity $m(v_1, v_2)$
as the number of edges between two vertices,
this gives:
\begin{enumerate}[(i)]
\item $m(v_{i,l}, v_{i,l+1}) = 1$,
for all $j \leq l < k$, and
\item
$m(v_{i-1,l}, v_{i,l}) + m(v_{i,l}, v_{i+1,l}) = 0$,
for all $j < l < k$.
\end{enumerate}

The definition extends naturally to
a double edge by considering
two edges in each subaisle.
A double edge,
$(v_{i,j}, v_{i,k})^2$,
is called connecting if
it connects horizontal edges,
that is:
\begin{enumerate}[(i)]
    \item $m(v_{i-1,j}, v_{i,j}) + m(v_{i,j}, v_{i+1,j}) > 0$,
    and
    \item $m(v_{i-1,k}, v_{i,k}) + m(v_{i,k}, v_{i+1,k}) > 0$,
\end{enumerate}


In the next section,
we will prove
that a connecting double edge
is not required in a minimal
tour subgraph.

\begin{thm}
\label{thm:main} 
There exists a minimum length tour subgraph
$T \subseteq G$ that does not contain a
connecting double edge.
\end{thm}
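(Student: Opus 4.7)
The plan is an exchange argument. Starting from a minimum length tour subgraph $T$ that contains a connecting double edge $(v_{i,j}, v_{i,k})^2$, I would construct another tour subgraph $T'$ of length at most $|T|$ that does not contain this particular double edge, and then iterate under a suitable termination measure to remove all connecting double edges. Since the double edge is connecting, each of $v_{i,j}$ and $v_{i,k}$ has at least one horizontal incident edge in $T$, so I would split into two cases depending on whether those horizontal edges lie on the same side of column $i$, or only on opposite sides.

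In the \emph{same-side case} (without loss of generality both endpoints have a horizontal edge to column $i+1$), I would build $T'$ by (i) downgrading $(v_{i,j}, v_{i,k})^2$ to a single edge in column $i$, (ii) adding a single edge $(v_{i+1,j}, v_{i+1,k})^1$ in column $i+1$, and (iii) removing one copy each of the horizontal edges $(v_{i,j}, v_{i+1,j})$ and $(v_{i,k}, v_{i+1,k})$. A direct degree count shows every vertex of $T'$ has even degree, and the total length satisfies $|T'| = |T| - 2\, d_{i,i+1}^{cross} < |T|$; provided $T'$ remains connected, this would contradict the minimality of $T$, ruling out this case.

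In the \emph{opposite-side case} (without loss of generality $v_{i,j}$ has its only horizontal edge to $v_{i+1,j}$ and $v_{i,k}$ to $v_{i-1,k}$), I would construct $T'$ by (i) downgrading the double edge in column $i$, (ii) removing one copy of $(v_{i,j}, v_{i+1,j})$, (iii) adding a single path from $v_{i+1,j}$ to $v_{i+1,k}$ in column $i+1$, and (iv) adding the horizontal edge $(v_{i+1,k}, v_{i,k})$. A vertex-by-vertex degree accounting shows that all parity flips cancel, the added and removed lengths balance exactly, and the original connecting double edge at $(v_{i,j}, v_{i,k})^2$ is eliminated. After each construction I would verify via Theorem~A that $T'$ is a tour subgraph (pick points remain covered since each subaisle in column $i$ between rows $j$ and $k$ retains at least one edge, all degrees are even as computed, and connectivity follows from the new column $i+1$ path together with the added horizontal edge at row $k$). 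Iteration under a measure such as the lexicographic position of connecting double edges (leftmost column, then lowest row) then removes them one at a time.

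The main obstacles lie in verifying connectivity of $T'$ (so that it is a valid tour subgraph under Theorem~A) and in ensuring that the opposite-side modification does not introduce a new connecting double edge in column $i+1$. The latter issue arises because, if column $i+1$ already contains vertical edges between rows $j$ and $k$ in $T$, then the added path combines with them into a double edge now flanked by the newly added horizontal edge at $v_{i+1,k}$, which can itself be connecting. Overcoming this requires either a refined construction sensitive to the pre-existing structure of the adjacent columns, or a carefully chosen termination measure for the iteration that accommodates such cascading replacements while still guaranteeing the procedure halts after finitely many steps.
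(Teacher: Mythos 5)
Your plan is the right genre of argument (an exchange that preserves length and a measure that forces termination), but the two difficulties you defer to the end --- connectivity of $T'$ and the cascading creation of new connecting double edges --- are not loose ends; they are the entire content of the theorem, and your construction as stated does not survive them. The same-side case already fails: suppose in $T$ the vertices $v_{i,j}$ and $v_{i,k}$ each carry a single horizontal edge to column $i+1$ \emph{and} a single horizontal edge to column $i-1$ (so both have even degree $4$ with the double edge), and columns $i-1$ and $i+1$ each contain a single vertical path joining rows $j$ and $k$. This is a valid tour subgraph, and both endpoints have a horizontal edge to column $i+1$, so your same-side transformation applies; but after removing the two horizontal copies and adding the path in column $i+1$, the vertices of column $i+1$ between rows $j$ and $k$ form a component carrying a double edge that is severed from the rest of $T'$. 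So $T'$ is not a tour subgraph and no contradiction with minimality is obtained. (The paper disposes of exactly this configuration by a different observation: when the edge connectivity $\lambda(v_{i,j},v_{i,k})$ exceeds $2$, one may delete a pair of parallel vertical edges outright and \emph{strictly} shorten $T$ --- the ``redundant double edge'' case. Your proposal has no counterpart to this, and without it the same-side case cannot be closed.)

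The opposite-side case has the second gap you yourself identify: the path you insert in column $i+1$ can merge with pre-existing edges there into a new double edge which, flanked by the horizontal edge you add at $v_{i+1,k}$, is again connecting, so a single application need not decrease any obvious count. The paper's resolution is a specific one-directional shift (its transformation always moves the configuration toward aisle $1$), together with a case analysis on the state $(m(a_{i-1},a_i), m(b_{i-1},b_i))$ showing that every state except $(0,1)$ is immediately reducible --- either the double edge was redundant, or the previous aisle lacked the single edge needed to recreate a double there --- and that state $(0,1)$ can only push the connecting double edge strictly one aisle further in the fixed direction, where it must eventually reach the boundary aisle and fall into a reducible state. Your proposal would need an analogue of both ingredients: a reason the new double edge in the adjacent column is either redundant or non-connecting, and a monotone quantity (e.g.\ the index of the aisle containing the offending configuration) that strictly changes in one direction. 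As written, the proposal is a correct outline with the decisive steps unproved, and one of its two branches is actually falsified by the example above.
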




\section{Proof}
\label{proof_v3}


In this section,
we prove the main result
by providing a state representation for
connecting double edges
and proving that all possible states can be
reduced.
For simplicity,
we refer to two distinct vertices
in aisle $i$ as $a_i$ and $b_i$.
The state of the connecting double edge $(a_i, b_i)^2$ 
can be represented as the number of
edges incident to the left of each vertex:
\begin{equation*}
    s = (m(a_{i-1}, a_i), m(b_{i-1}, b_i)) \in \mathcal{S}.
\end{equation*}
Without loss of generality,
we assume $m(b_{i-1}, b_{i}) > 0$,
as the warehouse can be flipped
vertically around aisle $i$
to ensure that this is always the case.
Due to horizontal symmetry,
the proof for the state $(s_a, s_b)$
also applies to $(s_b, s_a)$.
Therefore, it is sufficient to consider
only the following
five distinct combinations:
\begin{equation*}
    \mathcal{S} = \{ (0, 1), (0, 2), (1, 1), (1, 2), (2, 2) \}.
\end{equation*}

\begin{figure}[ht]
\begin{center}
\centering
    \begin{subfigure}[b]{0.17\textwidth}
        \centering
        \resizebox{\linewidth}{!}{
            \begin{tikzpicture}[shorten >=1pt,draw=black!50]

    \pgfmathsetmacro{\x}{2}
    \pgfmathsetmacro{\y}{4}
    \pgfmathsetmacro{\z}{1.25}

    \pgfmathsetmacro{\a}{0.25}

    \pgfmathsetmacro{\b}{0.5}

    \pgfmathsetmacro{\nodesize}{25}

    
    \node[circle, draw=black, minimum size=\nodesize pt, fill=white]
    (a_j-1) at (0, \y + \z) { };
    \node[align=center] at (a_j-1) {$a_{i-1}$};

    \node[circle, draw=black, minimum size=\nodesize pt, fill=white]
    (a_j) at (\x, \y + \z) { };
    \node[align=center] at (a_j) {$a_i$};

    \node[circle, draw=black, minimum size=\nodesize pt, fill=white]
    (b_j) at (\x, \z) { };
    \node[align=center] at (b_j) {$b_i$};

    \node[circle, draw=black, minimum size=\nodesize pt, fill=white]
    (b_j-1) at (0, \z) { };
    \node[align=center] at (b_j-1) {$b_{i-1}$};

    \node[circle, draw=black, minimum size=\nodesize pt, fill=white]
    (p_1) at (0, 0.5*\y + \z) { };

    \node[circle, draw=black, minimum size=\nodesize pt, fill=white]
    (p_2) at (\x, 0.5*\y + \z) { };

    \draw[-, draw=black, thick, double, double distance between line centers=8pt]
    (a_j) -- (p_2) -- (b_j);

    \draw[-, draw=black, thick]
    (b_j-1) -- (b_j);

    \begin{pgfonlayer}{background}
        \fill[gray!20]
        ($(a_j-1) + (\b, 0)$) --
        ($(b_j-1) + (\b, 0)$) --
        ($(b_j-1) + (-\b, 0)$) --
        ($(a_j-1) + (-\b, 0)$) --
        cycle;
    \end{pgfonlayer}


    \draw[->, draw=black, thick] (0.5*\x, \a) -- (0.5*\x, -\a);


    \node[circle, draw=black, minimum size=\nodesize pt, fill=white]
    (a2_j-1) at (0, -\z) { };
    \node[align=center] at (a2_j-1) {$a_{i-1}$};

    \node[circle, draw=black, minimum size=\nodesize pt, fill=white]
    (a2_j) at (\x, -\z) { };
    \node[align=center] at (a2_j) {$a_i$};

    \node[circle, draw=black, minimum size=\nodesize pt, fill=white]
    (b2_j) at (\x, -\y -\z) { };
    \node[align=center] at (b2_j) {$b_i$};

    \node[circle, draw=black, minimum size=\nodesize pt, fill=white]
    (b2_j-1) at (0, -\y -\z) { };
    \node[align=center] at (b2_j-1) {$b_{i-1}$};

    \node[circle, draw=black, minimum size=\nodesize pt, fill=white]
    (p2_1) at (0, -0.5*\y - \z) { };

    \node[circle, draw=black, minimum size=\nodesize pt, fill=white]
    (p2_2) at (\x, -0.5*\y - \z) { };

    \draw[-, draw=black, thick]
    (a2_j) -- (p2_2) -- (b2_j);

    \draw[-, draw=red, thick]
    (a2_j-1) -- (p2_1) -- (b2_j-1);

    \draw[-, draw=black, thick]
    (a2_j-1) -- (a2_j);

    \begin{pgfonlayer}{background}
        \fill[gray!20]
        ($(a2_j-1) + (\b, 0)$) --
        ($(b2_j-1) + (\b, 0)$) --
        ($(b2_j-1) + (-\b, 0)$) --
        ($(a2_j-1) + (-\b, 0)$) --
        cycle;
    \end{pgfonlayer}

\end{tikzpicture}
        }
        \caption{$(0, 1)$}
        \label{fig:transformation_01}
    \end{subfigure}
    \begin{subfigure}[b]{0.17\textwidth}
        \centering
        \resizebox{\linewidth}{!}{
            \begin{tikzpicture}[shorten >=1pt,draw=black!50]

    \pgfmathsetmacro{\x}{2}
    \pgfmathsetmacro{\y}{4}
    \pgfmathsetmacro{\z}{1.25}

    \pgfmathsetmacro{\a}{0.25}

    \pgfmathsetmacro{\b}{0.5}

    \pgfmathsetmacro{\nodesize}{25}

    
    \node[circle, draw=black, minimum size=\nodesize pt, fill=white]
    (a_j-1) at (0, \y + \z) { };
    \node[align=center] at (a_j-1) {$a_{i-1}$};

    \node[circle, draw=black, minimum size=\nodesize pt, fill=white]
    (a_j) at (\x, \y + \z) { };
    \node[align=center] at (a_j) {$a_i$};

    \node[circle, draw=black, minimum size=\nodesize pt, fill=white]
    (b_j) at (\x, \z) { };
    \node[align=center] at (b_j) {$b_i$};

    \node[circle, draw=black, minimum size=\nodesize pt, fill=white]
    (b_j-1) at (0, \z) { };
    \node[align=center] at (b_j-1) {$b_{i-1}$};

    \node[circle, draw=black, minimum size=\nodesize pt, fill=white]
    (p_1) at (0, 0.5*\y + \z) { };

    \node[circle, draw=black, minimum size=\nodesize pt, fill=white]
    (p_2) at (\x, 0.5*\y + \z) { };

    \draw[-, draw=black, thick, double, double distance between line centers=8pt]
    (a_j) -- (p_2) -- (b_j);

    \draw[-, draw=black, thick, double, double distance between line centers=8pt]
    (b_j-1) -- (b_j);

    \begin{pgfonlayer}{background}
        \fill[gray!20]
        ($(a_j-1) + (\b, 0)$) --
        ($(b_j-1) + (\b, 0)$) --
        ($(b_j-1) + (-\b, 0)$) --
        ($(a_j-1) + (-\b, 0)$) --
        cycle;
    \end{pgfonlayer}


    \draw[->, draw=black, thick] (0.5*\x, \a) -- (0.5*\x, -\a);


    \node[circle, draw=black, minimum size=\nodesize pt, fill=white]
    (a2_j-1) at (0, -\z) { };
    \node[align=center] at (a2_j-1) {$a_{i-1}$};

    \node[circle, draw=black, minimum size=\nodesize pt, fill=white]
    (a2_j) at (\x, -\z) { };
    \node[align=center] at (a2_j) {$a_i$};

    \node[circle, draw=black, minimum size=\nodesize pt, fill=white]
    (b2_j) at (\x, -\y -\z) { };
    \node[align=center] at (b2_j) {$b_i$};

    \node[circle, draw=black, minimum size=\nodesize pt, fill=white]
    (b2_j-1) at (0, -\y -\z) { };
    \node[align=center] at (b2_j-1) {$b_{i-1}$};

    \node[circle, draw=black, minimum size=\nodesize pt, fill=white]
    (p2_1) at (0, -0.5*\y - \z) { };

    \node[circle, draw=black, minimum size=\nodesize pt, fill=white]
    (p2_2) at (\x, -0.5*\y - \z) { };

    \draw[-, draw=black, thick]
    (a2_j) -- (p2_2) -- (b2_j);

    \draw[-, draw=red, thick]
    (a2_j-1) -- (p2_1) -- (b2_j-1);

    \draw[-, draw=black, thick]
    (a2_j-1) -- (a2_j);

    \draw[-, draw=black, thick]
    (b2_j-1) -- (b2_j);

    \begin{pgfonlayer}{background}
        \fill[gray!20]
        ($(a2_j-1) + (\b, 0)$) --
        ($(b2_j-1) + (\b, 0)$) --
        ($(b2_j-1) + (-\b, 0)$) --
        ($(a2_j-1) + (-\b, 0)$) --
        cycle;
    \end{pgfonlayer}

\end{tikzpicture}
        }
        \caption{$(0, 2)$}
        \label{fig:transformation_02}
    \end{subfigure}
    \begin{subfigure}[b]{0.17\textwidth}
        \centering
        \resizebox{\linewidth}{!}{
            \begin{tikzpicture}[shorten >=1pt,draw=black!50]

    \pgfmathsetmacro{\x}{2}
    \pgfmathsetmacro{\y}{4}
    \pgfmathsetmacro{\z}{1.25}

    \pgfmathsetmacro{\a}{0.25}

    \pgfmathsetmacro{\b}{0.5}

    \pgfmathsetmacro{\nodesize}{25}

    
    \node[circle, draw=black, minimum size=\nodesize pt, fill=white]
    (a_j-1) at (0, \y + \z) { };
    \node[align=center] at (a_j-1) {$a_{i-1}$};

    \node[circle, draw=black, minimum size=\nodesize pt, fill=white]
    (a_j) at (\x, \y + \z) { };
    \node[align=center] at (a_j) {$a_i$};

    \node[circle, draw=black, minimum size=\nodesize pt, fill=white]
    (b_j) at (\x, \z) { };
    \node[align=center] at (b_j) {$b_i$};

    \node[circle, draw=black, minimum size=\nodesize pt, fill=white]
    (b_j-1) at (0, \z) { };
    \node[align=center] at (b_j-1) {$b_{i-1}$};

    \node[circle, draw=black, minimum size=\nodesize pt, fill=white]
    (p_1) at (0, 0.5*\y + \z) { };

    \node[circle, draw=black, minimum size=\nodesize pt, fill=white]
    (p_2) at (\x, 0.5*\y + \z) { };

    \draw[-, draw=black, thick, double, double distance between line centers=8pt]
    (a_j) -- (p_2) -- (b_j);

    \draw[-, draw=black, thick]
    (b_j-1) -- (b_j);

    \draw[-, draw=black, thick]
    (a_j-1) -- (a_j);

    \begin{pgfonlayer}{background}
        \fill[gray!20]
        ($(a_j-1) + (\b, 0)$) --
        ($(b_j-1) + (\b, 0)$) --
        ($(b_j-1) + (-\b, 0)$) --
        ($(a_j-1) + (-\b, 0)$) --
        cycle;
    \end{pgfonlayer}


    \draw[->, draw=black, thick] (0.5*\x, \a) -- (0.5*\x, -\a);


    \node[circle, draw=black, minimum size=\nodesize pt, fill=white]
    (a2_j-1) at (0, -\z) { };
    \node[align=center] at (a2_j-1) {$a_{i-1}$};

    \node[circle, draw=black, minimum size=\nodesize pt, fill=white]
    (a2_j) at (\x, -\z) { };
    \node[align=center] at (a2_j) {$a_i$};

    \node[circle, draw=black, minimum size=\nodesize pt, fill=white]
    (b2_j) at (\x, -\y -\z) { };
    \node[align=center] at (b2_j) {$b_i$};

    \node[circle, draw=black, minimum size=\nodesize pt, fill=white]
    (b2_j-1) at (0, -\y -\z) { };
    \node[align=center] at (b2_j-1) {$b_{i-1}$};

    \node[circle, draw=black, minimum size=\nodesize pt, fill=white]
    (p2_1) at (0, -0.5*\y - \z) { };

    \node[circle, draw=black, minimum size=\nodesize pt, fill=white]
    (p2_2) at (\x, -0.5*\y - \z) { };

    \draw[-, draw=black, thick]
    (a2_j) -- (p2_2) -- (b2_j);

    \draw[-, draw=red, thick]
    (a2_j-1) -- (p2_1) -- (b2_j-1);

    \draw[-, draw=black, thick, double, double distance between line centers=8pt]
    (a2_j-1) -- (a2_j);

    \begin{pgfonlayer}{background}
        \fill[gray!20]
        ($(a2_j-1) + (\b, 0)$) --
        ($(b2_j-1) + (\b, 0)$) --
        ($(b2_j-1) + (-\b, 0)$) --
        ($(a2_j-1) + (-\b, 0)$) --
        cycle;
    \end{pgfonlayer}

\end{tikzpicture}
        }
        \caption{$(1, 1)$}
        \label{fig:transformation_11}
    \end{subfigure}
    \begin{subfigure}[b]{0.17\textwidth}
        \centering
        \resizebox{\linewidth}{!}{
            \begin{tikzpicture}[shorten >=1pt,draw=black!50]

    \pgfmathsetmacro{\x}{2}
    \pgfmathsetmacro{\y}{4}
    \pgfmathsetmacro{\z}{1.25}

    \pgfmathsetmacro{\a}{0.25}

    \pgfmathsetmacro{\b}{0.5}

    \pgfmathsetmacro{\nodesize}{25}

    
    \node[circle, draw=black, minimum size=\nodesize pt, fill=white]
    (a_j-1) at (0, \y + \z) { };
    \node[align=center] at (a_j-1) {$a_{i-1}$};

    \node[circle, draw=black, minimum size=\nodesize pt, fill=white]
    (a_j) at (\x, \y + \z) { };
    \node[align=center] at (a_j) {$a_i$};

    \node[circle, draw=black, minimum size=\nodesize pt, fill=white]
    (b_j) at (\x, \z) { };
    \node[align=center] at (b_j) {$b_i$};

    \node[circle, draw=black, minimum size=\nodesize pt, fill=white]
    (b_j-1) at (0, \z) { };
    \node[align=center] at (b_j-1) {$b_{i-1}$};

    \node[circle, draw=black, minimum size=\nodesize pt, fill=white]
    (p_1) at (0, 0.5*\y + \z) { };

    \node[circle, draw=black, minimum size=\nodesize pt, fill=white]
    (p_2) at (\x, 0.5*\y + \z) { };

    \draw[-, draw=black, thick, double, double distance between line centers=8pt]
    (a_j) -- (p_2) -- (b_j);

    \draw[-, draw=black, thick, double, double distance between line centers=8pt]
    (b_j-1) -- (b_j);

    \draw[-, draw=black, thick]
    (a_j-1) -- (a_j);

    \begin{pgfonlayer}{background}
        \fill[gray!20]
        ($(a_j-1) + (\b, 0)$) --
        ($(b_j-1) + (\b, 0)$) --
        ($(b_j-1) + (-\b, 0)$) --
        ($(a_j-1) + (-\b, 0)$) --
        cycle;
    \end{pgfonlayer}


    \draw[->, draw=black, thick] (0.5*\x, \a) -- (0.5*\x, -\a);


    \node[circle, draw=black, minimum size=\nodesize pt, fill=white]
    (a2_j-1) at (0, -\z) { };
    \node[align=center] at (a2_j-1) {$a_{i-1}$};

    \node[circle, draw=black, minimum size=\nodesize pt, fill=white]
    (a2_j) at (\x, -\z) { };
    \node[align=center] at (a2_j) {$a_i$};

    \node[circle, draw=black, minimum size=\nodesize pt, fill=white]
    (b2_j) at (\x, -\y -\z) { };
    \node[align=center] at (b2_j) {$b_i$};

    \node[circle, draw=black, minimum size=\nodesize pt, fill=white]
    (b2_j-1) at (0, -\y -\z) { };
    \node[align=center] at (b2_j-1) {$b_{i-1}$};

    \node[circle, draw=black, minimum size=\nodesize pt, fill=white]
    (p2_1) at (0, -0.5*\y - \z) { };

    \node[circle, draw=black, minimum size=\nodesize pt, fill=white]
    (p2_2) at (\x, -0.5*\y - \z) { };

    \draw[-, draw=black, thick]
    (a2_j) -- (p2_2) -- (b2_j);

    \draw[-, draw=red, thick]
    (a2_j-1) -- (p2_1) -- (b2_j-1);

    \draw[-, draw=black, thick, double, double distance between line centers=8pt]
    (a2_j-1) -- (a2_j);

    \draw[-, draw=black, thick]
    (b2_j-1) -- (b2_j);

    \begin{pgfonlayer}{background}
        \fill[gray!20]
        ($(a2_j-1) + (\b, 0)$) --
        ($(b2_j-1) + (\b, 0)$) --
        ($(b2_j-1) + (-\b, 0)$) --
        ($(a2_j-1) + (-\b, 0)$) --
        cycle;
    \end{pgfonlayer}

\end{tikzpicture}
        }
        \caption{$(1, 2)$}
        \label{fig:transformation_12}
    \end{subfigure}
    \begin{subfigure}[b]{0.17\textwidth}
        \centering
        \resizebox{\linewidth}{!}{
            \begin{tikzpicture}[shorten >=1pt,draw=black!50]

    \pgfmathsetmacro{\x}{2}
    \pgfmathsetmacro{\y}{4}
    \pgfmathsetmacro{\z}{1.25}

    \pgfmathsetmacro{\a}{0.25}

    \pgfmathsetmacro{\b}{0.5}

    \pgfmathsetmacro{\nodesize}{25}

    
    \node[circle, draw=black, minimum size=\nodesize pt, fill=white]
    (a_j-1) at (0, \y + \z) { };
    \node[align=center] at (a_j-1) {$a_{i-1}$};

    \node[circle, draw=black, minimum size=\nodesize pt, fill=white]
    (a_j) at (\x, \y + \z) { };
    \node[align=center] at (a_j) {$a_i$};

    \node[circle, draw=black, minimum size=\nodesize pt, fill=white]
    (b_j) at (\x, \z) { };
    \node[align=center] at (b_j) {$b_i$};

    \node[circle, draw=black, minimum size=\nodesize pt, fill=white]
    (b_j-1) at (0, \z) { };
    \node[align=center] at (b_j-1) {$b_{i-1}$};

    \node[circle, draw=black, minimum size=\nodesize pt, fill=white]
    (p_1) at (0, 0.5*\y + \z) { };

    \node[circle, draw=black, minimum size=\nodesize pt, fill=white]
    (p_2) at (\x, 0.5*\y + \z) { };

    \draw[-, draw=black, thick, double, double distance between line centers=8pt]
    (a_j) -- (p_2) -- (b_j);

    \draw[-, draw=black, thick, double, double distance between line centers=8pt]
    (b_j-1) -- (b_j);

    \draw[-, draw=black, thick, double distance between line centers=8pt]
    (a_j-1) -- (a_j);

    \begin{pgfonlayer}{background}
        \fill[gray!20]
        ($(a_j-1) + (\b, 0)$) --
        ($(b_j-1) + (\b, 0)$) --
        ($(b_j-1) + (-\b, 0)$) --
        ($(a_j-1) + (-\b, 0)$) --
        cycle;
    \end{pgfonlayer}


    \draw[->, draw=black, thick] (0.5*\x, \a) -- (0.5*\x, -\a);


    \node[circle, draw=black, minimum size=\nodesize pt, fill=white]
    (a2_j-1) at (0, -\z) { };
    \node[align=center] at (a2_j-1) {$a_{i-1}$};

    \node[circle, draw=black, minimum size=\nodesize pt, fill=white]
    (a2_j) at (\x, -\z) { };
    \node[align=center] at (a2_j) {$a_i$};

    \node[circle, draw=black, minimum size=\nodesize pt, fill=white]
    (b2_j) at (\x, -\y -\z) { };
    \node[align=center] at (b2_j) {$b_i$};

    \node[circle, draw=black, minimum size=\nodesize pt, fill=white]
    (b2_j-1) at (0, -\y -\z) { };
    \node[align=center] at (b2_j-1) {$b_{i-1}$};

    \node[circle, draw=black, minimum size=\nodesize pt, fill=white]
    (p2_1) at (0, -0.5*\y - \z) { };

    \node[circle, draw=black, minimum size=\nodesize pt, fill=white]
    (p2_2) at (\x, -0.5*\y - \z) { };

    \draw[-, draw=black, thick]
    (a2_j) -- (p2_2) -- (b2_j);

    \draw[-, draw=red, thick]
    (a2_j-1) -- (p2_1) -- (b2_j-1);

    \draw[-, draw=black, thick, double, double distance between line centers=8pt]
    (a2_j-1) -- (a2_j);

    \draw[-, draw=black, thick]
    (a2_j-1) -- (a2_j);

    \draw[-, draw=black, thick]
    (b2_j-1) -- (b2_j);

    \begin{pgfonlayer}{background}
        \fill[gray!20]
        ($(a2_j-1) + (\b, 0)$) --
        ($(b2_j-1) + (\b, 0)$) --
        ($(b2_j-1) + (-\b, 0)$) --
        ($(a2_j-1) + (-\b, 0)$) --
        cycle;
    \end{pgfonlayer}

\end{tikzpicture}
        }
        \caption{$(2, 2)$}
        \label{fig:transformation_22}
    \end{subfigure}
\caption{Tour subgraph $T$ (top) and transformation $T'$ (bottom)
for each state.} 
\label{fig:transformations}
\end{center}
\end{figure}


We now define a transformation
$T \rightarrow T' = (V, E_{T'}) \subseteq G$,
which can be applied to a tour subgraph
containing a connecting double edge,
where:
\begin{equation*}
E_{T'} =
(E_T \! \setminus \! \{(a_i, b_i), (b_{i-1}, b_i)\})
\cup \{(a_{i-1}, b_{i-1}), (a_{i-1}, a_i)\}.
\end{equation*}


Figure \ref{fig:transformations}
illustrates the altered parts of the subgraph for each state
where the shaded regions represent the unknown configurations
in the previous aisle.
As $T$ is a valid tour subgraph,
all the conditions of Theorem \ref{thm:subtour} are met.
The transformed subgraph $T'$ remains a
valid tour subgraph because:
\begin{enumerate}[(i)]
    \item Each altered subaisle contains a continuous edge,
    ensuring that all items within it are still visited;
    \item There is a guaranteed path through
    $(b_{i-1}, a_{i-1}, a_i, b_i)$,
    ensuring that all vertices remain connected; and 
    \item For each altered vertex,
    the transformation either,
    removes two edges,
    adds two,
    or leaves the number
    of incident edges unchanged,
    thereby preserving even vertex degrees.
\end{enumerate}


Since $T'$ is a valid tour subgraph
of equal length,
it remains only to show that
the total number of subaisles involved in
connecting double edges is reduced.
If this can be proven for
all states $s \in \mathcal{S}$,
then all such configurations
can be eliminated through iterative application.


\begin{proof}[Proof of Theorem \ref{thm:main}]

We begin by observing two special cases of
connecting double edges.

\paragraph{Preliminary: Case 0.1 (redundant double edge)}
Let $\lambda(a_i, b_i)$ be the edge connectivity
between $a_i, b_i \in V$.
If $\lambda(a_i, b_i) > 2$,
removing a pair of edges between
adjacent vertices will result in
a subgraph with reduced length.
The degree parity of the vertices
and the connectivity of the graph
remain unchanged,
therefore this is still a valid
tour subgraph.
This is shown in Figure \ref{fig:redundant}
where removing the red edge pair
will eliminate the connecting double edge.
We will refer to this case as a redundant double edge.

\begin{figure}[ht]
\centering
    \begin{minipage}{\textwidth}
        \centering
        \resizebox{0.22\textwidth}{!}
        {
        \begin{tikzpicture}
        [shorten >=1pt,->,draw=black!50, node distance=\layersep]

        \tikzset{minimum size=20pt}

            \pgfmathsetmacro{\x}{1.25}
        
            \node[shape=circle,draw=black] (a1) at (0, 3*\x) { };
            \node at (a1) {$a_{i}$};
            \node[shape=circle,draw=black] (d1) at (0, 2*\x) { };
            \node[shape=circle,draw=black] (c1) at (0, \x) { };
            \node[shape=circle,draw=black] (b1) at (0, 0) { }; 
            \node at (b1) {$b_{i}$};
            \draw[-, thick, draw=black, double, double distance between line centers=8pt]
            (a1) -- (d1);
            \draw[-, thick, draw=black, double, double distance between line centers=8pt]
            (c1) -- (b1);
            \draw[-, thick, red, double, double distance between line centers=8pt]
            (d1) -- (c1);
            \draw[-, dotted, draw=black, thick]
            (a1) -- (-2*\x, 3*\x) -- (-2*\x, 0) -- (b1);
        \end{tikzpicture}
        }
        \caption{Redundant double edge.} 
        \label{fig:redundant}
    \end{minipage}
\end{figure}

\paragraph{Preliminary: Case 0.2 (no single edge in previous aisle)}
The transformation $T \rightarrow T'$
will only result in
$(a_{i-1}, b_{i-1})^2 \in T'$
if $(a_{i-1}, b_{i-1})^1 \in T$.
Therefore,
if any subaisles between these two vertices
contain a configuration other than
a single edge,
the transformation immediately reduces the
number of double edges for any state.

\medskip

\noindent
Three states can be directly
shown to be reducible as they
must belong to one of these
preliminary cases.

\paragraph{Case 1: State $s \in \{ (1, 1), (1, 2), (2, 2) \}$}
\begin{enumerate}[(i)]
    \item If $(a_{i-1}, b_{i-1})^1 \in T$,
    path $(a_i, a_{i-1}, b_{i-1}, b_i)$ exists,
    making a 
    $(a_{i}, b_{i})^2 \in T$
    double edge redundant (Case 0.1).
    \item If $(a_{i-1}, b_{i-1})^1 \notin T$,
    the transformation reduces the number
    of double edges (Case 0.2).
\end{enumerate}

\noindent
For the remaining two states,
we will assume the connecting double edges
are not redundant,
and $(a_{i-1}, b_{i-1})^1 \in T$
resulting in $(a_{i-1}, b_{i-1})^2 \in T'$.

\paragraph{Case 2: State $s = (0, 2)$}
A connecting double
$(a_{i-1}, b_{i-1})^2 \in T'$
is redundant as
path $(a_{i-1}, a_i, b_i, b_{i-1})$ exists.

\paragraph{Case 3: State $s = (0, 1)$}
Firstly, as $m(b_{i-1}, b_i) = 0$ in $T'$,
it is possible for a double edge
to extend from $a_{i-1}$
to below $b_{i-1}$.
We will refer to the lowest vertex
in the double edge as $c_{i-1}$,
which could still be $b_{i-1}$.
For a double edge
$(a_{i-1}, c_{i-1})^2 \in T'$,
there are three possibilities:
\begin{enumerate}[(i)]
    \item There are no horizontal edges
    incident to $c_{i-1}$,
    therefore the double edge is not connecting
    and we have reduced the number of
    connecting double edges.
    \item There are incident edges to the
    right of $c_{i-1}$.
    Flipping the warehouse by vertical
    symmetry along aisle $i - 1$
    shows this edge belongs to state
    $(1, 1)$ or $(1, 2)$
    and therefore,
    can be reduced (Case 1).
    \item There are only incident edges to the
    left of $c_{i-1}$.
    The connecting double edge has therefore
    not been reduced, but
    shifted one aisle to the left.
\end{enumerate}

\noindent
Given we have now addressed all possible cases,
it has been shown that a connecting double
edge can either be reduced or shifted to the left.
A connecting double edge in the first (or last)
aisle must have incident edges in one direction,
therefore belonging to state
$(1, 1)$, $(1, 2)$ or $(2, 2)$,
which are reducible.
This shows that a series of transformations
must terminate in, or before,
the first aisle.

\end{proof}


\section{Conclusion}

We have shown that minimal picker tours
in rectangular warehouses with any number of
cross-aisles do not require vertical double edges
to connect components containing horizontal edges.
This is a significant result,
as existing algorithms
for this problem consider
single and double edges as valid configurations
influencing tour graph connectivity
\citep{ratliff1983order, roodbergen2001routing, pansart2018exact}.
By simplifying the structure of feasible routes,
more efficient algorithms can be developed.
Furthermore, the results of this letter also
apply directly to both horizontal and vertical
edges in the algorithm for the related
rectilinear traveling salesman problem presented
by \citet{cambazard2018fixed}.


\newpage

\section*{Acknowledgments}

George was supported by an Australian Government
Research Training Program (RTP) Scholarship.


%


\bibliographystyle{elsarticle-num-names} 
\bibliography{bibliography}

\end{document}